\documentclass[11pt,a4paper,twoside]{article}
\usepackage[centertags]{amsmath}
\usepackage{color}
\usepackage{amsthm,enumerate}
\usepackage[psamsfonts]{amsfonts,amssymb}

\newtheorem{definition}{Definition}
\newtheorem{theorem}[definition]{Theorem}
\newtheorem{proposition}[definition]{Proposition}
\newtheorem{lemma}[definition]{Lemma}

\newtheorem{rem}[definition]{Remark}
\newenvironment{remark}{\begin{rem}  \rm }{\end{rem}}
\newtheorem{rems}[definition]{Remarks}

\newcommand\unit{\hbox{\rm 1\kern-2.8truept l}}
\def\sra{\rightarrow}
\def\M{{\cal M}}
\def\LL{{\cal L}}
\def\EE{{\cal E}}
\def\e{\varepsilon}
\newcommand{\tr}[1]{{\rm tr }(#1)}   
\newcommand{\scal}[2]{\langle #1,#2 \rangle }

\begin{document}


\title{Logarithmic Sobolev inequalities and exponential entropy decay in non-commutative algebras}
\author{
Raffaella Carbone 
\\
\small{Dipartimento di Matematica,
Universit\`a di Pavia,
via Ferrata 1, 27100 Pavia, Italy,}\\
\small{e-mail: raffaella.carbone@unipv.it
}}
\date{}


\date{}
\maketitle

\begin{abstract}
We study the relations between (tight) logarithmic Sobolev inequalities, entropy decay and spectral gap inequalities for Markov evolutions on von Neumann algebras. We prove that log-Sobolev inequalities (in the non-commutative form defined by Olkiewicz and Zegarlinski in \cite{OZ}) imply spectral gap inequalities, with optimal relation between the constants.
Furthermore, we show that a uniform exponential decay of a proper relative entropy is equivalent to a modified version of log-Sobolev inequalities; this entropy decay turns out to be implied by the usual log-Sobolev inequality adding some regularity conditions on the quadratic forms.
\end{abstract}

\noindent
{\it Keywords and phrases}: {Quantum Markov semigroups, invariant state, entropy, logarithmic Sobolev inequalities, spectral gap.}
\smallskip

\noindent
{\it MSC}: 47D06, 
46L53, 
60Jxx 

\section{Introduction}
We consider  a continuous Markov (i.e. identity preserving) semigroup  $P=(P_t)_{t\ge 0}$ of positive operators defined on a von Neumann algebra $\M$, where $\M$ consists of operators acting on a finite dimensional Hilbert space $h$.
This kind of semigroups includes quantum Markov semigroups, but can have milder positivity conditions, and can be seen as a non commutative generalization of semigroups associated with Markov processes with a finite number of states.
In the following, we shall suppose that $P$ has a faithful invariant density $\rho$, with respect to which $P$ is symmetric; this invariant density will induce a family of interpolating $L^p$ norms on $\M$: for $f\in\M$, $\|f\|_p^p=\tr{|\rho^{1/(2p)}f\rho^{1/(2p)}|^p}$, where $\tr{\,}$ denotes the usual trace. This is a standard construction, widely used both in commutative and non-commutative settings (see \cite{Vari,BE,DSC,OZ,Saloff-Coste}).

A classical problem, in the study of markovian evolutions, is the analysis of contractive and asymptotic properties of the associated semigroup with respect to different norms.
These behaviors of the semigroup can be described by variational inequalities involving the quadratic form associated with the infinitesimal generator of the semigroup.
Here we study the relationships between logarithmic-Sobolev inequalities, spectral gap and entropy decay for Markov semigroups in a non commutative framework. 
\\
On this subject, one of the most popular results, for semigroups acting on commutative spaces of functions, is
the equivalence between hypercontractivity and the so called logarithmic Sobolev inequalities, proved by Gross (\cite{Gross}, see also \cite{Federbush}). And a remarkable well known consequence of hypercontractivity is the exponential decay of the relative entropy, with a precise relation between the constants involved in the two conditions. This entropy  decay is really equivalent to hypercontractivity for some diffusions (see \cite{Ba,GZ}), but not in general, and a counterexample can be found in \cite{DPP}. 
In particular, in \cite{BT}, the authors study some modifications of logarithmic Sobolev inequalities, which are weaker than the original ones, but stronger than spectral gap inequalities; one of these inequalities determines the best rate for a uniform exponential decay of the entropy. 
Further, in \cite{BT} one can also find an example (Example $3.9$) where the constant characterizing the best decay of the entropy is not the one coming from logarithmic Sobolev inequalities: it suffices to consider a very simple Markov chain, with two states and non-uniform invariant law. 

We shall precisely define the inequalities we are speaking about in next sections, but by now we simply denote by LSI(c), MLSI(c) and SGI(c) the logarithmic Sobolev,  the modified logarithmic Sobolev and the spectral gap inequality respectively, all with best constant $c$  (Definitions \ref{LSI}, \ref{MLSI}, \ref{SGI}).
We have, for a reversible Markov chain, a quite clear picture, that we summarize in the scheme of Figure \ref{fig:commutative_case}.
In this scheme, we recall the hierarchical order of the different inequalities and the corresponding asymptotic properties of the semigroup.
\\
\begin{figure}[ht]
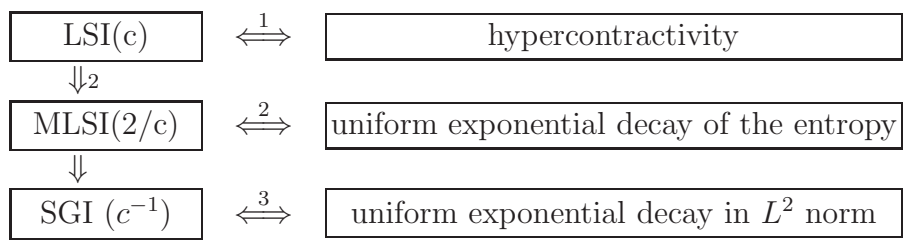
\label{Fig}
	{\large
\begin{center}
\begin{tabular}{ccl}
	\framebox[.2\textwidth]{LSI(c)} &  $\overset{1}\Longleftrightarrow	$ & \framebox[.6\textwidth]{hypercontractivity}\\
	\parbox{1cm}{\noindent$\Downarrow\scriptstyle{2}$} & 	&	\\
\framebox[.2\textwidth]{MLSI(2/c)} &$\overset{2}\Longleftrightarrow	$& \framebox[.6\textwidth]{uniform exponential decay of the entropy}\\
\parbox{1cm}{$\Downarrow$} & 	&	\\
\framebox[.2\textwidth]{SGI $(c^{-1})$} &$\overset{3}\Longleftrightarrow	$ & \framebox[.6\textwidth]{uniform exponential decay in $L^2$ norm}
\end{tabular}
\end{center}}
	\caption{Scheme of hierarchical order in the commutative case}
	\label{fig:commutative_case}
\end{figure}

\noindent
One can read \cite{BT,DSC,Saloff-Coste} to have a picture of what happens in the case of reversible Markov chains with finite state space, and also \cite{Vari,Ba,GZ} for detailed studies on continuous settings. The same references will also give some hints to explore other questions related to log-Sobolev inequalities, such as optimal transport and concentration of measures.

For spaces of non-commutative functions, it is not clear whether the same scheme holds true in general. 
Let us try to detail what was known and what we can add with this note.
The equivalence marked with number 3 was proved for instance in \cite{CF} (see also \cite{C1,C2} for some examples of computation/estimate of the spectral gap for quantum models). 
As for the proof of hypercontractivity for some quantum models, we can say that there exist a variety of results, see \cite{Bia,Boz,C,CaSa,CaLi,Gross2,Krolak,Lindsay,Nelson}, and see also \cite{BZ} for a criterium on the spectrum of the generator which implies hypercontractivity and for other examples. But a general theory which puts hypercontractivity in relation with the log-Sobolev inequality has been lacking for a long time.
A fundamental step was made by Olkiewicz and Zegarlinski (\cite{OZ}), who proved a non commutative generalization of the Gross' theorem (equivalence marked with number 1) under a so-called ``regularity condition'' of the quadratic form associated with the infinitesimal generator of the semigroup; further, they observed that the proper logarithmic Sobolev inequality implies the spectral gap inequality, even if they cannot catch the optimal relation between the constants. A primary merit of their work is also, more generally, that they introduce the appropriate, and not obvious, mathematical objects to treat this kind of problems in non commutative framework.\\
Following the lines of these previous works, here we want to make some additional steps towards the solution of connected open problems, starting from the study of entropy decay, even if, unfortunately, there are some problems which remain to investigate in order to complete the scheme in Figure \ref{Fig}.
\\
- We introduce the suitable non-commutative version of the modified logarithmic Sobolev inequality equivalent to the decay of a relative entropy, naturally related with the norms induced by the invariant density $\rho$ (Theorem \ref{MLS}).
\\
- We show that LSI implies MLSI, at least under some suitable conditions which are a weaker version of the regularity conditions introduced by Olkiewicz and Zegarlinski for the proof of the non commutative Gross' theorem (Proposition \ref{LS-MLS}).
\\
- We prove the optimal relation between spectral gap and log-Sobolev constant (Theorem \ref{LS-SG}), i.e. we show that the spectral gap is not less than the inverse of the log-Sobolev constant (in \cite{OZ} something similar was proved, but with a factor $1/2$). We remark that this proof holds for the case when $h$ is separable too (infinite dimensional).

Furthermore, we observe that, once we have studied the role of all these inequalities, 
when the problem is stated in finite dimensions, it can sometimes be easy to understand whether these inequalities are verified or not, 
 but what is anyway generally not evident, is the evaluation of the best involved constants. Also for classical processes, the exact log-Sobolev constants are rarely known (\cite{ChSh,DSC,Gross}). In the quantum case, this can become even more difficult. Anyway these results already revealed to be useful: see, for instance, \cite{BZ,C,CaSa} for the computation/estimate of quantum log-Sobolev constants and proofs of hypercontractivity for some quantum models.

The organization of the paper is as follows. In Section 2 we concentrate on the decay of the entropy, essentially proving the analogue of the equivalence marked with number 2 in Figure \ref{Fig}: we introduce the proper non commutative entropy for integrable functions and show that its uniform exponential decay is equivalent to a suitable modified log-Sobolev inequality (Theorem \ref{MLS}). In Section 3, we concentrate on the 
hierarchy of different inequalities and we show that the log-Sobolev inequality with constant $c$ implies a spectral gap inequality with constant $c^{-1}$ and a modified log-Sobolev inequality with constant $(2/c)$. The first implication (Theorem \ref{LS-SG}) is proved in general, while, for the latter (Proposition \ref{LS-MLS}), the result needs some regularity condition; this is the reason why we dedicate Subsection 3.1 to the discussion of these aspects. The relations between constants are optimal in both cases. 

{\bf Important note.} Except for this note and the related reference \cite{KT}, this version of the paper dates back to february 2013, even if published here one year later.
In the meanwhile, some new connected results appeared in \cite{KT}. 
Kastoryano and Temme (\cite{KT}) presented, in the same context used here, a very interesting study of log-Sobolev inequalities defined on any $L^p$ space, a clear analysis of the link between different asymptotic and convergence properties of the evolution (see, for instance, \cite[Proposition 13 and Theorem 22]{KT}), and some nice applications. To be more precise on the contact points we have, their paper anticipated part of Theorem \ref{MLS}, the implication ``b)$\Rightarrow $ a)'' (see \cite[Lemma 21]{KT}); while \cite[Theorem 16]{KT} solves, at least under some regularity conditions, one of the ``open questions'' mentioned at the end of this paper.  
Finally, from \cite[Proposition 13 and Theorem 16]{KT} one could deduce the same relation obtained in Theorem 11, even if only in the particular case of regularity assumptions.

\section{Uniform exponential decay of the entropy and a modificaton of logarithmic Sobolev inequality}

We consider a von Neumann algebra ${\cal M}$, acting on a finite dimensional Hilbert space $h$, and a positive, continuous, identity preserving semigroup $P=(P_t)_{t\ge 0}$ with a faithful invariant state $\rho$ (we shall identify the state and its density).
We introduce the $L^p$ spaces associated with $\rho$ in the following way: $\|f\|_p^p=\tr{|\rho^{1/2p}f\rho^{1/2p}|^p}$, for $p\in[1,+\infty)$ and the $\|\;\|_\infty$ norm is the usual norm of the algebra $\M$. $L^2$ is a Hilbert space with respect to the scalar product 
$
\langle f,g\rangle=\tr{\rho^{1/2}f^*\rho^{1/2} g}.
$
\\
We shall suppose that the semigroup $P$ is symmetric with respect to this scalar product (this is the analogue of considering reversible invariant laws for Markov chains).
If we drop this condition, actually very similar results can be proved, but, as for commutative algebras, some details need to be treated differently and the constants will be a little worse.
\\
We denote by $\LL$ the infinitesimal generator of the semigroup and by $\EE$ the associated sesquilinear form with respect to the $L^2$ scalar product
$$
\EE(f,g)=-\langle f,\LL g\rangle=-\tr{\rho^{1/2}f^*\rho^{1/2}\LL g}.
$$
For positive functions $f$ in $\M$, we introduce the relative entropy of $f$ (relative to $\rho$) and we shall denote it by $E(f)$,
$$
E(f)=\tr{\rho^{1/2} f \rho^{1/2}(\lg(\rho^{1/2} f \rho^{1/2})-\lg\rho)}-\| f \|_1 \lg\| f\|_1
$$
Our main result in this section is the characterization of the uniform exponential decay of this relative entropy by means of a proper infinitesimal inequality.
\\
Obviously, there are many interesting and useful possible definitions of entropy, so
one can wonder, first of all, about the opportunity of considering this definition of the entropy. 
\smallskip

{\bf About the choice of the relative entropy}.
A proper definition of the relative entropy, we think, should be chosen considering the context it has to be related with and the classical objects it has to generalize, in the sense that it should coincide with the usual entropy in the commutative case. For the latter aspect, we want to 
recover the lines followed, for instance, in \cite{BT}, \cite{DSC} and \cite{GZ}, but this idea determines only the commutative restriction of the definition. 
However, the choice is essentially ``obliged'' if we want to be coherent with some usual non-commutative definition of the $L^p$ norms associated to a state, which are the ones introduced above (and used, for instance, also in \cite{CF}), and so with the corresponding study of hypercontractivity and log-Sobolev inequalities initiated in \cite{OZ}.
\\
Indeed, also in the classical setting, entropy naturally appears as a result of a proper derivation of some norms; similarly, we can obtain this definition in the non-commutative case. 
In \cite{OZ}, the authors
introduced a family of maps $I_{p,q}$, for $p,q>1$, which are a kind of embedding of the $L^q$ space in the $L^p$ space,
 \begin{equation}\label{Ipq}
 I_{p,q}(f)=\rho^{-1/2p}   (\rho^{1/2q} f \rho^{1/2q})^{q/p}   \rho^{-1/2p},
 \qquad \mbox{ for } f>0,
 \end{equation} 
and an associated operator valued relative entropy $T_q$,
\begin{eqnarray}\label{Tq}
T_q(f)&=&\rho^{-1/2q}(\rho^{1/2q} f \rho^{1/2q})\lg (\rho^{1/2q} f \rho^{1/2q}) \rho^{-1/2q} - \frac{1}{2q}(f\lg \rho+\lg \rho f)
\nonumber\\
&=&-q\frac{d}{ds}I_{q+s,q}(f){\Big{|}}_{s=0}.
\end{eqnarray}
In particular, 
$T_1(f)=f\rho^{1/2}\lg (\rho^{1/2} f \rho^{1/2} )\rho^{-1/2} - \frac{1}{2}(f\lg \rho+\lg \rho f) $
is a selfadjoint operator and we can rewrite the definition of the entropy equivalently as
\begin{equation}\label{Entropy-2}
E(f)=\langle \unit, T_1(f) \rangle-\| f \|_1 \lg\| f\|_1.
\end{equation}
Notice that, when $\rho$ and $f$ commute, $I_{p,q}(f)=f^{q/p}$ and $T_q(f)=f\lg f$, for any $p$ and $q$, so, for commutative spaces of functions, these functions do not depend on the invariant measure, and the operator valued entropy does not even depend on $q$. Moreover the entropy will be $E(f)=\tr{\rho f\lg (f/\| f\|_1)}$, and we recognize the usual classical expression (\cite{DSC}) if we think about $\rho$ and $f$ as diagonal matrices.
\smallskip\\

After these remarks, we go back to the discussion about the exponential entropy decay and state the main result on this point of the subject.

\begin{theorem}\label{MLS}
The following conditions are equivalent:
\\
a) a constant $c$ verifies $E(P_tf)\le e^{-c t}E(f) $ for all $f>0$;
\\
b) a constant $c$ verifies
$ c E(f) \leq \EE(\lg(\rho^{1/2} f \rho^{1/2})-\lg\rho,f)$  for all $f>0$.
\end{theorem}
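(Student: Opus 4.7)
My plan is to prove the equivalence by reducing both statements to a single derivative identity, namely
\[
\frac{d}{dt} E(P_t f) = -\EE\bigl(\lg(\rho^{1/2}P_tf\,\rho^{1/2}) - \lg\rho,\, P_t f\bigr).
\]
Once this identity is in hand, (a)$\Leftrightarrow$(b) follows by a standard Gronwall-type argument: differentiating (a) at $t=0$ yields (b), while inserting (b) at every time $t$ into the identity gives $\frac{d}{dt} E(P_t f) \le -c E(P_t f)$, and Gronwall integrates this to (a). The positivity hypothesis $f>0$ is preserved under $P_t$ because $P_t$ is positive and identity preserving, so $f \ge \varepsilon \unit$ implies $P_t f \ge \varepsilon \unit$; this guarantees that $\rho^{1/2} P_t f \rho^{1/2}$ stays strictly positive and the operator logarithm is smooth along the trajectory.

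To derive the derivative identity, I would set $X_t := \rho^{1/2}P_t f \,\rho^{1/2}$, so that $\dot X_t = \rho^{1/2}\LL(P_t f)\rho^{1/2}$. Since $P_t$ is identity-preserving and symmetric with respect to $\scal{\cdot}{\cdot}$, the state $\rho$ is invariant under the pre-adjoint, hence $\|P_tf\|_1 = \tr{\rho P_t f}$ is constant in $t$ and its contribution to $E(P_tf)$ drops out of the derivative. What remains is
\[
\frac{d}{dt} E(P_t f) = \frac{d}{dt}\tr{X_t(\lg X_t - \lg\rho)}.
\]
Using the chain rule $\frac{d}{dt}\tr{g(X_t)} = \tr{g'(X_t)\dot X_t}$ with $g(x)=x\lg x$ gives
\[
\frac{d}{dt}\tr{X_t\lg X_t} = \tr{(\lg X_t + \unit)\dot X_t} = \tr{\dot X_t\lg X_t},
\]
because $\tr{\dot X_t} = \tr{\rho\LL(P_tf)} = 0$ by invariance of $\rho$. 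Together with $\frac{d}{dt}\tr{X_t\lg\rho}=\tr{\dot X_t \lg\rho}$, this yields
\[
\frac{d}{dt} E(P_t f) = \tr{\rho^{1/2}\LL(P_tf)\rho^{1/2}\bigl(\lg X_t - \lg\rho\bigr)}.
\]
Since $\lg X_t - \lg\rho$ is self-adjoint, cyclicity of the trace rewrites the right-hand side as $-\EE(\lg X_t - \lg\rho,\, P_t f)$, exactly the claimed identity.

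The main technical point is the computation of $\frac{d}{dt}\tr{X_t\lg X_t}$ in a non-commutative setting: although $X_t$ and $\dot X_t$ generally do not commute, the functional calculus identity $\frac{d}{dt}\tr{g(X_t)}=\tr{g'(X_t)\dot X_t}$ still holds (this is the only place where noncommutativity could have bitten, and it is harmless inside the trace). Everything else is essentially bookkeeping: keeping track of which quantities are self-adjoint, using cyclicity to match the left/right placement of $\rho^{1/2}$ required by the definition of $\EE$, and exploiting $\LL\unit = 0$ and the $\rho$-symmetry of $P_t$ to kill boundary terms. With the identity established, the equivalence (a)$\Leftrightarrow$(b) is immediate from Gronwall's lemma applied to $\varphi(t):=E(P_tf)$.
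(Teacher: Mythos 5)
Your proposal is correct and follows essentially the same route as the paper: both reduce the equivalence to the derivative identity $\frac{d}{dt}E(P_tf) = \tr{\rho^{1/2}\LL (P_tf)\rho^{1/2}(\lg(\rho^{1/2}P_tf\rho^{1/2})-\lg\rho)} = -\EE(\lg(\rho^{1/2}P_tf\rho^{1/2})-\lg\rho,\,P_tf)$, note that $\|P_tf\|_1$ is constant and strict positivity is preserved, and then conclude via monotonicity of $e^{ct}E(P_tf)$ (differentiation at $t=0$ for a)$\Rightarrow$b), integration for b)$\Rightarrow$a)). The only difference is one of packaging: you invoke the standard trace chain rule $\frac{d}{dt}\tr{g(X_t)}=\tr{g'(X_t)\dot X_t}$ with $g(x)=x\lg x$ (legitimate here, since $X_t$ stays uniformly strictly positive in finite dimension), whereas the paper proves the same content by hand, establishing $\frac{d}{dt}\lg X_t$ via the integral representation of the logarithm with a dominated-convergence justification (its Lemma \ref{logDerivative}) and then explicitly checking that the cross term, which equals $\tr{\dot X_t}=\tr{\rho\LL P_tf}$, vanishes by invariance of $\rho$ --- exactly the $+\unit$ term in your $g'(X_t)=\lg X_t+\unit$.
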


\begin{definition}\label{MLSI}
We shall call the inequality 
$$ 
c E(f) \leq \EE(\lg(\rho^{1/2} f \rho^{1/2})-\lg\rho,f)
$$ 
a Modified Logarithmic Sobolev inequality of constant $c$ (MLSI$(c)$, for short). 
\end{definition}

The commutative version of this inequality and its relation with other similar inequalities were studied in detail in \cite{BT}, for Markov chains with finite state space (this MLSI is essentially relation (1.5) in \cite{BT}, modulo a factor $1/2$), while it is called a tight (tendue, in french) $L^1$-logarithmic Sobolev inequality in \cite{Ba}.
For diffusive classical Markov processes, i.e. with a ``carr\'e du champ'' operator verifying Liebniz condition, this inequality is equivalent to the log-Sobolev inequality that we shall define later (see always \cite{Ba}), but it is known that this is not true in general in a discrete setting (see \cite{DPP} for a counterexample).

For the proof of the theorem, in this section, we shall use the notation $f_t=P_t f$, for $f$ in $\M$.

\begin{lemma}\label{logDerivative}
For all strictly positive elements $f$,
$$
\frac{d}{dt} \lg (\rho^{1/2} f_t \rho^{1/2})
=\int_0^{+\infty} \frac{1}{s+\rho^{1/2} f_t \rho^{1/2}}\rho^{1/2}\LL f_t \rho^{1/2} \frac{1}{s+\rho^{1/2} f_t \rho^{1/2}}  ds. 
$$
\end{lemma}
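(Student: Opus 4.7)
The plan is to derive the formula from the standard integral representation of the operator logarithm and then differentiate under the integral sign.

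First I would recall the integral representation valid for any strictly positive operator $A$ on a finite dimensional Hilbert space:
$$
\lg A \;=\; \int_0^{+\infty}\!\!\left(\frac{1}{1+s}\,\unit \;-\; \frac{1}{s+A}\right)ds,
$$
which follows from the scalar identity $\lg\lambda = \int_0^\infty\bigl(\frac{1}{1+s}-\frac{1}{s+\lambda}\bigr)ds$ for $\lambda>0$ and the functional calculus. Setting $A(t):=\rho^{1/2} f_t \rho^{1/2}$, faithfulness of $\rho$ together with $f>0$ and semigroup positivity force $A(t)$ to be strictly positive for all $t\ge 0$ (this can be verified on any compact time interval since $t\mapsto f_t$ is norm-continuous and $f_0>0$ is bounded below by a positive multiple of $\unit$, a property preserved on compact intervals by continuity). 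Thus the representation is applicable to $\lg(\rho^{1/2} f_t\rho^{1/2})$.

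Next I would differentiate in $t$ by pulling $\frac{d}{dt}$ inside the integral. The only $t$-dependence is in the resolvent $(s+A(t))^{-1}$, whose derivative is given by the usual resolvent differentiation formula
$$
\frac{d}{dt}\,(s+A(t))^{-1} \;=\; -\,(s+A(t))^{-1}\,A'(t)\,(s+A(t))^{-1}.
$$
Since $\frac{d}{dt} f_t = \LL f_t$, we have $A'(t) = \rho^{1/2}\,\LL f_t\,\rho^{1/2}$, and the claimed formula follows immediately once interchange of $d/dt$ and $\int$ is justified.

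The only point requiring some care is that interchange. Fix $t_0\ge 0$ and a small neighborhood $[t_0-\delta,t_0+\delta]$; by continuity of $t\mapsto A(t)$, there is a constant $\alpha>0$ with $A(t)\ge \alpha\,\unit$ uniformly on this interval, and a constant $M$ with $\|A'(t)\|\le M$. Then the integrand $(s+A(t))^{-1}A'(t)(s+A(t))^{-1}$ is bounded in operator norm by $M(s+\alpha)^{-2}$, which is integrable on $(0,+\infty)$. Dominated convergence then permits the interchange, yielding exactly
$$
\frac{d}{dt}\lg(\rho^{1/2} f_t \rho^{1/2}) \;=\; \int_0^{+\infty}\frac{1}{s+\rho^{1/2} f_t \rho^{1/2}}\,\rho^{1/2}\,\LL f_t\,\rho^{1/2}\,\frac{1}{s+\rho^{1/2} f_t \rho^{1/2}}\;ds.
$$

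I do not expect any real obstacle: the argument is essentially operator calculus in finite dimensions. The only mildly delicate step is producing the uniform lower bound $A(t)\ge\alpha\,\unit$ on compact time intervals to justify the dominated convergence—but this is immediate from the assumption that $f$ is strictly positive and from norm-continuity of the semigroup.
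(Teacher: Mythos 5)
Your proposal is correct and follows essentially the same route as the paper: both start from the integral representation $\lg A=\int_0^\infty\bigl((1+s)^{-1}\unit-(s+A)^{-1}\bigr)\,ds$, differentiate the resolvent via the identity $(s+A(t+h))^{-1}-(s+A(t))^{-1}=-(s+A(t+h))^{-1}(A(t+h)-A(t))(s+A(t))^{-1}$, and justify the interchange of limit and integral by dominated convergence using a uniform strict lower bound on $\lambda(f_t)$ over a compact time interval (the paper's $\varepsilon(t)=\min_{|h|\le 1}\lambda(f_{t+h})$ is exactly your $\alpha$) together with a norm bound on the difference quotient (the paper uses $\|(e^{h\LL}-\unit)f_t/h\|\le e^{\|\LL\|}\|f_t\|$, playing the role of your $M$). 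The only difference is presentational: the paper works directly with difference quotients, while you invoke the standard differentiation-under-the-integral theorem, which rests on the same dominating function.
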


\begin{proof}
We use the integral representation of the logarithm for a strictly positive operator $g$, 
$\lg g=\int_0^\infty \frac{1}{s+1}-\frac{1}{s+g} ds$, so
\begin{eqnarray}\label{eqLog}
\frac{d}{dt} \lg g(t)
&=& \lim_{h\sra 0}\frac{1}{h}\int_0^\infty \frac{1}{s+g(t)}-\frac{1}{s+g(t+h)} ds
\nonumber\\
&=&  \lim_{h\sra 0}\int_0^\infty (s+g(t+h))^{-1}\frac{g(t+h)-g(t)}{h}(s+g(t))^{-1} ds
\\
&=&  \int_0^\infty (s+g(t))^{-1}\frac{dg(t)}{dt}(s+g(t))^{-1} ds,\nonumber
\end{eqnarray}
where the last equality holds if we suppose that we have the conditions to interchange limit and integral.
The previous relation is the thesis, if we take $g(t)=\rho^{1/2} f_t \rho^{1/2}$ and consider that $\frac{dg(t)}{dt}=\rho^{1/2} {\cal L}f_t \rho^{1/2}$.
So we simply have to prove that we really can interchange the limit and integral operations in our case.
\\
In this proof, we shall denote by $\lambda(g)$ the minimum eigenvalue of a selfadjoint operator $g$.
Now notice that
$$
(s+\rho^{1/2} f_{t+h} \rho^{1/2})\ge s+\lambda(\rho)\lambda( f_{t+h}).
$$
So, since all these operators are positive, at least for $|h|\le 1$,
$$
\|(s+\rho^{1/2} f_{t+h} \rho^{1/2})^{-1} \|
\le \frac{1}{s+\lambda(\rho)\lambda( f_{t+h})}
\le\frac{1}{s+\lambda(\rho)\varepsilon(t)},
$$
where $\varepsilon(t):=\min_{|h|\le 1}\lambda(f_{t+h})$ (this minimum exists and is strictly positive since the semigroup is continuous and the algebra acts on a finite dimensional space).
\\
Moreover we have, always for $|h|\le 1$, 
$$
\left\|\frac{f_{t+h}-f_{t}}{h}\right\|=\left\|\frac{e^{h{\cal L}}-\unit}{h}(f_t)\right\| 
\le e^{\| \LL\|}\|f_t\|.
$$
So the integrand in \eqref{eqLog}, for the case $g(t)=\rho^{1/2} f_t \rho^{1/2}$ can be controlled in norm,
\begin{eqnarray*}
&&\|(s+\rho^{1/2} f_{t+h} \rho^{1/2})^{-1}\rho^{1/2}\frac{f_{t+h}-f_{t}}{h}\rho^{1/2}(s+\rho^{1/2} f_{t} \rho^{1/2})^{-1}\|
\le\\
&&\kern+7cm\le \frac{e^{\| \LL\|}\|f_t\|}{(s+\lambda\varepsilon(t)) (s+\lambda(f_{t}))},
\end{eqnarray*}
which is integrable on $(0,+\infty)$ as a function of $s$. This concludes the proof since now we are allowed to use Lebesgue's dominated convergence theorem.
\end{proof}

\begin{proof} ({\it of Theorem \ref{MLS}}).
First notice that, by the invariance of $\rho$,
\begin{equation}\label{der}
\frac{d}{dt}\|f_t\|_1=\frac{d}{dt}\tr{\rho P_t f}=\tr{\rho{\cal L}f_t}=0.
\end{equation}
Moreover, $f>0$ (i.e. strictly positive) implies that $P_tf>0$ for all $t$, since, by the positivity of $P$, if we denote by $\lambda(f)$ the minimum eigenvalue of $f$ as before, we have
$$
P_tf \ge \lambda(f) P_t(\unit)
=\lambda(f)\unit>0.
$$
Now, we can compute the derivative of the entropy evolution
\begin{eqnarray*}
\frac{d}{dt} E(f_t) 
&\overset{\mbox{by }\eqref{Entropy-2}}{=}&\frac{d}{dt}\left\{ \langle \unit, T_1(f_t) \rangle - \|f_t\|_1\lg \|f_t\|_1 \right\}
\overset{\mbox{by }\eqref{der}}{=}\frac{d}{dt} \langle \unit, T_1(f_t) \rangle
\\
&&\kern-1cm= \tr{\rho^{1/2} \LL f_t \rho^{1/2}(\lg(\rho^{1/2} f_t \rho^{1/2})-\lg\rho)}\\
&&\kern-1cm +\tr{\rho^{1/2} f_t \rho^{1/2}
\int_0^{+\infty} \frac{1}{x+\rho^{1/2} f_t \rho^{1/2}}\rho^{1/2}\LL f_t \rho^{1/2} \frac{1}{x+\rho^{1/2} f_t \rho^{1/2}}  dx },
\end{eqnarray*}
by Lemma \ref{logDerivative}.
But the term in the last line is null since it coincides with
\begin{eqnarray*}
&&\kern-1cm
\int_0^{+\infty} \tr{\rho^{1/2} f_t \rho^{1/2}\frac{1}{x+\rho^{1/2} f_t \rho^{1/2}}\rho^{1/2}\LL f_t \rho^{1/2} \frac{1}{x+\rho^{1/2} f_t \rho^{1/2}}  } dx 
\\
&=&
\int_0^{+\infty} \tr{\frac{\rho^{1/2} f_t \rho^{1/2}}{(x+\rho^{1/2} f_t \rho^{1/2})^2}\rho^{1/2}\LL f_t \rho^{1/2}  } dx 
\\
&=& \tr{
\int_0^{+\infty} \frac{\rho^{1/2} f_t \rho^{1/2}}{(x+\rho^{1/2} f_t \rho^{1/2})^2}  dx \, \rho^{1/2}\LL f_t \rho^{1/2} } 
\\
&=& \mbox{tr}\left(\left.-\frac{\rho^{1/2} f_t \rho^{1/2}}{x+\rho^{1/2} f_t \rho^{1/2}}\right|_0^\infty \rho^{1/2}\LL f_t \rho^{1/2}\right)
=\tr{\rho \LL f_t }=0.
\end{eqnarray*}
So we conclude
\begin{eqnarray*}
\frac{d}{dt} E(f_t) 
&=& \tr{\rho^{1/2} \LL f_t \rho^{1/2}(\lg(\rho^{1/2} f_t \rho^{1/2})-\lg\rho)}.
\end{eqnarray*}
Now, for any positive real number $c$, we have
\begin{eqnarray*}
\frac{d}{dt} e^{c t} E(f_t) 
&=& e^{c t} (c E(f_t) + \frac{d}{dt} E(f_t))
\\
&=& e^{c t}(c E(f_t) + \tr{\rho^{1/2} \LL f_t \rho^{1/2}(\lg(\rho^{1/2} f_t \rho^{1/2})-\lg\rho)}),
\end{eqnarray*}
so that the previous derivative is non-positive if and only if
$$
c E(f_t) \leq \EE(\lg(\rho^{1/2} f_t \rho^{1/2})-\lg\rho,f_t).
$$
Now the equivalence between the two conditions in the statement of this theorem easily follows:
\\
- if a) holds, then the derivative has to be non-positive at least for $t=0$ and we obtain b);\\
- conversely, if b) holds, then the derivative is never positive and so 
$e^{c t} E(f_t) \le E(f)$ for all $t$.
\end{proof}

\section{Regularity properties of the Dirichlet forms and relations between different variational inequalities}

For commutative reversible Markov processes, as we already outlined, the MLSI introduced in the previous section is implied by a corresponding log-Sobolev inequality and it is equivalent to it for some diffusions. Moreover it implies the spectral gap inequality (\cite{Ba,DSC,GZ}).
\\
Here we want to investigate whether we are in a position to prove similar results for non-commutative algebras.
To do this, first, we have to introduce the mathematical objects involved in these questions, starting with the precise definition of spectral gap and logarithmic Sobolev inequalities that we have mentioned many times.

\begin{definition} \label{SGI}
The spectral gap of the semigroup $P$ (or equivalently of the generator $\LL$) is the best constant $c$ verifying
\begin{equation}\label{SG}
c{\rm Var}(f)\le \EE(f,f)
\end{equation}
for all $f$ in $\M$, where ${\rm Var}(f)=\|f-\tr{\rho f}\|^2$.
We shall call \eqref{SG} a spectral gap inequality of constant $c$ (SGI(c)).
\end{definition}

\begin{definition}\label{LSI}
For $f$ strictly positive in $\M$, we shall denote $H(f):=\langle f,T_2(f)\rangle-\|f\|_2^2\lg\|f\|_2$.
\\
We shall say that the semigroup verifies a logarithmic Sobolev inequality with constant $c$ (LSI$(c)$) if
$$
H(f)\le c\EE(f,f)
$$
for some positive constant $c$ and for all positive $f$ in $\M$.
\end{definition}
When similar objects (SGI and LSI) are defined for non uniformly continuous semigroups, obviously one should ask the argument $f$ to be in the domain of the generator. 

Usually, one can consider a more general form of LSI, which allows the presence of an additional term proportional to 
$\|f\|_2^2$ in the right hand side. But here, for the kind of problems we are going to tackle (relations with the spectral gap and exponential decay of the entropy), the only interesting LSIs are the stronger ones we have introduced in the previous definition (they are sometimes called tight, for instance by Bakry \cite{Ba}).  

We remark that, when $\rho$ and $f$ commute, $H(f)=\tr{\rho f^2\lg (f/\| f\|_2)}=\frac{1}{2}E(f^2)$ and we recover the usual form of LSIs for commutative functions. Notice also that the relation of $H$ with the entropy, in this non-commutative case, is expressed by (recall the definition of $T_2$, in relation \eqref{Tq})
$$
E(f)= 2\langle I_{2,1}(f), T_2(I_{2,1}(f)) \rangle
-2 \| I_{2,1}(f) \|_2^2 \lg\| I_{2,1}(f) \|_2
=2H(I_{2,1}(f)),
$$
which is natural if we remember that, for commutative functions, we have $I_{2,1}(f)=f^{1/2}$ by its definition in relation \eqref{Ipq}.

We want to study the mutual relations of the different kinds of inequalities: SGI, MLSI, LSI. Their relation with the asymptotic properties of the semigroup is now clear also in the non commutative setting, but we resume here the main results for the sake of clarity.
\\
1. The spectral gap inequality is equivalent to a uniform exponential convergence in $L^2$ norm: (see \cite{CF} for the non commutative setting) the spectral gap is the best constant $c$ verifying
$$
\|P_t f-\tr{\rho f}\|_2 \le e^{-ct}\|f-\tr{\rho f}\|_2
\qquad \mbox{ for all } f\in\M.
$$
\\
2. The MLSI is equivalent to the uniform exponential decay of the entropy (Theorem \ref{MLS} here).
\\
3. The non-commutative LSIs were introduced by Olkiewicz and Zegarlinski in \cite{OZ}, and they proved the relation with hypercontractivity:
if $P$ is hypercontractive, then it verifies a LSI; vice versa, if $P$ verifies a LSI and a regularity condition of the quadratic forms, then it is hypercontractive (see \cite[Theorem 3.8]{OZ},  for the precise statement).
\smallskip

As we already underlined in the Introduction, for commutative reversible Markov chains, we have
a precise link between the different inequalities (Figure \ref{fig:commutative_case}). 
The aim of this section is to investigate whether the same is true for our context. We immediately underline that the answer is not complete and we will detail the missing points later. Essentially, we can prove some satisfactory results about the fact that LSIs imply the other two inequalities, but we have not been able by now to conclude, under general conditions, about the relation between MLSI and SGI.
\\
It is important to highlight that it is already known that, if $P$ verifies a LSI with constant $c$, then its spectral gap is not less than $(2c)^{-1}$ (\cite{OZ}, Theorem 4.1). We shall improve the estimate here to the optimal one, while, for proving that LSI implies MLSI, we shall need a regularity condition of the quadratic form. 
In the next subsection we shall introduce and discuss some properties about two different  regularity conditions, but the only necessary element for reading Subsection 3.2 is the notion of weak regularity condition (WRC) in Definition \ref{WRC}.

\subsection{Regularity conditions of the Dirichlet forms}

First of all, we introduce the definition of the two regularity conditions we are going to consider. Then we shall discuss how they are related and show a case when they are always satisfied.
\\
Following \cite{OZ}, and remembering that the maps $I_{p,q}$ are defined in \eqref{Ipq},
 we introduce the quadratic form on $L^q$ as
$$
\EE_q(f,f):=-\scal{I_{p,q}(f)}{\LL f}=-\scal{\rho^{-1/2p}   (\rho^{1/2q} f \rho^{1/2q})^{q/p}   \rho^{-1/2p}}{\LL f},
$$
where $p$ is conjugate to $q$ and $f$ is positive. Notice that $\EE_2(f,f)=\EE(f,f)$. 

\begin{definition}\label{WRC}
We shall say that the semigroup (or the associated quadratic form) verifies the regularity condition (RC) when the following holds for all $q>1$
  $$
\mbox{(RC)}\qquad 
\EE_2(I_{2,q}f,I_{2,q}f)
\le
\frac{q^2}{4(q-1)}\EE_q(f,f),
\quad \mbox{ for all }f>0. $$
We shall say that the semigroup (or the associated quadratic form) verifies the weak regularity condition with constant $\beta$ (WRC-$\beta$) when 
$$
\mbox{(WRC)}\qquad
 \beta \EE(I_{2,1}f,I_{2,1}f)
\le
\EE(\lg(\rho^{1/2} f \rho^{1/2})-\lg\rho,f),
\quad \mbox{ for all }f>0.$$
We shall call the (WRC) standard when the coefficient $\beta$ is equal $4$.\\
\end{definition}

The regularity condition (RC) is a particular case of the one introduced by Olkiewicz and Zegarlinski in \cite{OZ},  Definition 3.6. In their version, an additive term, proportional to $\| f\|^q$, was admitted on the right hand side, but here, because of the tight form of LSIs we consider, only this formulation can be useful.
\\
Both (RC) and standard (WRC) are always verified for reversible Markov processes on commutative spaces; in the non-reversible case, similar conditions hold, but with different constants.
\\
In the following proposition, we prove that standard (WRC) is weaker than (RC).
We show that they are both verified for symmetric trace preserving semigroups in Theorem \ref{trace}, but this result is really new only for what (WRC) is concerned (see Remark \ref{rem-tr}).

\begin{proposition}\label{RC}
(RC) implies standard (WRC).
\end{proposition}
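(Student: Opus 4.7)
The natural idea is that standard (WRC) should be the first-order expansion of (RC) around $q=1$. My plan is therefore to pass to the limit $q \to 1^+$ in the (RC) inequality: the factor $q^2/(4(q-1))$ on the right hand side blows up while $\EE_q(f,f)$ vanishes at $q=1$ (since $I_{p,q}(f)\to\unit$ and $\langle\unit,\LL f\rangle = \tr{\rho\LL f} = 0$ by invariance of $\rho$), so a L'H\^opital-style analysis should produce a finite limit matching standard (WRC) exactly, with the $1/4$ of the RC constant at $q=1$ turning into the factor $\beta = 4$.

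The heart of the argument is a first-order expansion of $I_{p,q}(f)$ at $q=1$. Using the conjugacy $1/p + 1/q = 1$ one has $1/(2p) = (q-1)/(2q)$ and $q/p = q-1$, so
$$I_{p,q}(f) = \rho^{-(q-1)/(2q)}\bigl(\rho^{1/(2q)} f \rho^{1/(2q)}\bigr)^{q-1}\rho^{-(q-1)/(2q)},$$
which formally equals $\unit$ at $q=1$. Setting $\alpha = q-1$ (in finite dimension and with $\rho, f > 0$ strictly positive, this expression is analytic in $\alpha$ near $0$), the two outer $\rho$-factors each contribute $-\tfrac{1}{2}\lg\rho$, while $(\rho^{1/(2q)}f\rho^{1/(2q)})^{\alpha} = e^{\alpha\lg(\rho^{1/2}f\rho^{1/2})+O(\alpha^2)}$ contributes $\lg(\rho^{1/2}f\rho^{1/2})$, yielding
$$I_{p,q}(f) = \unit + (q-1)\bigl[\lg(\rho^{1/2} f \rho^{1/2}) - \lg\rho\bigr] + O((q-1)^2).$$
Using $\langle\unit,\LL f\rangle = 0$, this gives the key expansion
$$\EE_q(f,f) = -\langle I_{p,q}(f),\LL f\rangle = (q-1)\,\EE\bigl(\lg(\rho^{1/2}f\rho^{1/2}) - \lg\rho,\,f\bigr) + O((q-1)^2).$$

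Multiplying by $q^2/(4(q-1))$ and letting $q\to 1^+$, the right hand side of (RC) tends to $\tfrac{1}{4}\EE(\lg(\rho^{1/2} f \rho^{1/2}) - \lg\rho,\,f)$; meanwhile continuity of $q\mapsto I_{2,q}(f)$ and of the Dirichlet form give $\EE_2(I_{2,q}f, I_{2,q}f) \to \EE(I_{2,1}f, I_{2,1}f)$ on the left hand side. Passing to the limit in (RC) therefore produces
$$\EE(I_{2,1}f, I_{2,1}f) \le \tfrac{1}{4}\,\EE\bigl(\lg(\rho^{1/2}f\rho^{1/2}) - \lg\rho,\,f\bigr),$$
i.e.\ standard (WRC) with $\beta = 4$. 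The only non-routine step is the first-order expansion of $I_{p,q}(f)$: the parameter $\alpha = q-1$ enters simultaneously in the outer exponent of $\rho$, the inner exponents $\rho^{1/(2q)}$, and the outer power $(\cdot)^\alpha$, and one needs the product rule together with the observation that $(A(\alpha))^{\alpha} = \unit + \alpha\lg A(0) + O(\alpha^2)$ (so that the $\alpha$-dependence of $A$ is irrelevant at leading order) to see that the inner $\rho^{1/(2q)}$ exponents contribute nothing and that the leading coefficient is precisely $\lg(\rho^{1/2}f\rho^{1/2}) - \lg\rho$.
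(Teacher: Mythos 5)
Your argument is correct and is essentially the paper's own proof: the paper likewise derives standard (WRC) by letting $q\to 1^+$ in (RC), using the invariance $\tr{\rho\LL f}=0$ and a first-order expansion in $q-1$ of the right-hand side, together with continuity of $\EE$ and of $q\mapsto I_{2,q}(f)$ on the left. The only cosmetic difference is that the paper expands the two pieces $\rho^{1/q}-\rho$ and $(\rho^{1/2q}f\rho^{1/2q})^{q-1}-\unit$ separately via power series, whereas you package the same computation as the single expansion $I_{p,q}(f)=\unit+(q-1)\bigl[\lg(\rho^{1/2}f\rho^{1/2})-\lg\rho\bigr]+O((q-1)^2)$.
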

\begin{proof}
For reversible Markov processes on commutative spaces, (WRC) is usually proven directly, by the use of some inequality involving the logarithm, as in \cite{DSC}, for instance.
In this proof, we see how it can also be deduced by (RC) by taking the limit for $q\sra 1$.  
First, we shall rewrite the right-hand side in (RC), for strictly positive $f$, dropping the factor $q^2/4$ which will not give any problem in the limit,
\begin{eqnarray*}
\frac{1}{q-1}\EE_q(f,f)
&=& -\frac{1}{q-1}\tr{\rho^{1/2q}   (\rho^{1/2q} f \rho^{1/2q})^{q-1}   \rho^{1/2q}\LL f}
\\
&&\kern-2.3cm= -\frac{1}{q-1}\tr{(\rho^{1/q}  - \rho)\LL f}
-\frac{1}{q-1}\tr{\rho^{1/2q}  ( (\rho^{1/2q} f \rho^{1/2q})^{q-1}  -\unit) \rho^{1/2q}\LL f}.
\end{eqnarray*}
Now we compute the limits of the two terms separately
\begin{eqnarray*}
\frac{1}{q-1}(\rho^{1/q}-\rho)
&=&\frac{1}{q-1}\rho^{1/q}(\unit-\rho^{(q-1)/q})\\
&=&-\frac{1}{q}\rho^{1/q} \sum_{k\ge 1 }\left(\frac{q-1}{q}\right)^{k-1} \frac{(\lg \rho)^k}{k!}
\;\underset{q\sra 1}{\longrightarrow}\;( -\rho\lg\rho).
\end{eqnarray*}
and similarly
$$
\frac{1}{q-1}((\rho^{1/2q} f \rho^{1/2q})^{q-1} -\unit)=...... \;\underset{q\sra 1}{\longrightarrow}\; \lg(\rho^{1/2} f \rho^{1/2}).
$$
Then
\begin{eqnarray*}
\frac{1}{q-1}\EE_q(f,f)
\;\underset{q\sra 1}{\longrightarrow}&&
\tr{(\rho\lg\rho-\rho^{1/2}  \lg (\rho^{1/2} f \rho^{1/2})   \rho^{1/2})\LL f}\\
&&= \EE(\lg(\rho^{1/2} f \rho^{1/2})-\lg\rho,f).
\end{eqnarray*}
Now it is easy to see that $I_{2,q}(f)\sra_{q\sra 1}I_{2,1}(f)$ and, since the quadratic form $\EE$ is continuous, one can easily conclude
\begin{eqnarray*}
 \EE(I_{2,1}f,I_{2,1}f)
 &=&\lim_{q\sra 1}\EE_2(I_{2,q}f,I_{2,q}f)
\le \lim_{q\sra 1} \frac{q^2}{4(q-1)}\EE_q(f,f)\\
&=&\frac{1}{4} \EE(\lg(\rho^{1/2} f \rho^{1/2})-\lg\rho,f),
\end{eqnarray*}
which is the standard (WRC) property.
\end{proof}

\smallskip

\begin{theorem}\label{trace}
Let $P$ be trace preserving and symmetric. Then the corresponding quadratic forms verify (RC) and standard (WRC).
\end{theorem}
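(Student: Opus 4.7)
The plan is to establish (RC); standard (WRC) then follows at once by Proposition \ref{RC}. The proof of (RC) proceeds by a spectral reduction that writes both $\EE_q(f,f)$ and $\EE_2(I_{2,q}f, I_{2,q}f)$ as weighted sums over pairs of spectral levels of $f$ with the same non-negative symmetric weights, so that the matrix inequality reduces to a scalar ``Stroock--Varopoulos'' inequality applied termwise.

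In the trace-preserving symmetric setting the invariant state is tracial, so $I_{p,q}(f) = f^{q/p}$ and in particular $I_{2,q}(f) = f^{q/2}$; moreover $\LL$ admits the self-adjoint Lindblad representation
$$
\LL f = \sum_\alpha \Bigl( V_\alpha^* f V_\alpha - \tfrac12 V_\alpha^* V_\alpha f - \tfrac12 f V_\alpha^* V_\alpha\Bigr),
$$
with the family $\{V_\alpha\}$ closed under taking the adjoint (the closure is precisely what encodes the symmetry). I would take the spectral decomposition $f = \sum_k \lambda_k P_k$ with $\lambda_k > 0$ and set
$$
b_{kl} := \sum_\alpha \tr{V_\alpha P_k V_\alpha^* P_l} \ge 0.
$$
Cyclicity of the trace combined with the adjoint closure gives $b_{kl}=b_{lk}$. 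A direct computation---using $\sum_l P_l = \unit$ to rewrite $\tr{V_\alpha^*V_\alpha f^q}$ as $\sum_{k,l} \lambda_k^q \tr{V_\alpha P_k V_\alpha^* P_l}$ and then symmetrising in $k,l$---yields
\begin{align*}
\EE_q(f,f) &= \tfrac12 \sum_{k,l} (\lambda_k - \lambda_l)(\lambda_k^{q-1} - \lambda_l^{q-1})\, b_{kl},\\
\EE_2(f^{q/2}, f^{q/2}) &= \tfrac12 \sum_{k,l} (\lambda_k^{q/2} - \lambda_l^{q/2})^2\, b_{kl}.
\end{align*}
The essential point is that the \emph{same} weights $b_{kl}$ appear on both sides, because $f$ and $f^{q/2}$ share their spectral projections.

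The inequality (RC) then reduces termwise to the scalar bound
$$
(a^{q/2} - b^{q/2})^2 \le \frac{q^2}{4(q-1)}(a - b)(a^{q-1} - b^{q-1}), \qquad a,b > 0,\ q > 1,
$$
which is an immediate application of Cauchy--Schwarz to $a^{q/2} - b^{q/2} = \int_b^a (q/2)\, u^{q/2 - 1}\, du$: writing the integrand as $(q/2)\cdot 1\cdot u^{q/2-1}$ and applying $(\int \phi\psi)^2 \le (\int \phi^2)(\int \psi^2)$ produces exactly $(a-b)\cdot \tfrac{q^{2}}{4(q-1)}(a^{q-1}-b^{q-1})$ on the right. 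Standard (WRC) follows as a corollary via Proposition \ref{RC}. The main obstacle is the first step: one must justify both the symmetric Lindblad representation and the simplification $I_{p,q}(f) = f^{q/p}$, each of which rests on the interplay of trace preservation with symmetry; once these structural ingredients are in place, the remaining computations are routine and the scalar Cauchy--Schwarz inequality closes the argument.
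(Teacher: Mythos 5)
Your reduction to a termwise scalar Stroock--Varopoulos inequality with symmetric non-negative weights is exactly the right skeleton, and the computational core is sound: the two weighted-sum identities for $\EE_q(f,f)$ and $\EE_2(f^{q/2},f^{q/2})$, the symmetrisation using $b_{kl}=b_{lk}$, and the Cauchy--Schwarz proof of $(a^{q/2}-b^{q/2})^2\le\frac{q^2}{4(q-1)}(a-b)(a^{q-1}-b^{q-1})$ are all correct, and this last inequality is literally the one the paper uses. The genuine gap is your very first structural step. The theorem is stated for a \emph{positive} identity-preserving semigroup on a von Neumann algebra $\M$, and the paper stresses from the outset that it covers semigroups with milder positivity conditions than complete positivity. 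A self-adjoint Lindblad representation $\LL f=\sum_\alpha\bigl(V_\alpha^*fV_\alpha-\tfrac12 V_\alpha^*V_\alpha f-\tfrac12 fV_\alpha^*V_\alpha\bigr)$ exists only for \emph{completely} positive semigroups (GKLS), so your proof silently strengthens the hypothesis: for a positive-but-not-CP trace-symmetric semigroup the operators $V_\alpha$, and hence your weights $b_{kl}=\sum_\alpha\tr{V_\alpha P_kV_\alpha^*P_l}$, simply do not exist, and the argument never gets off the ground for part of the theorem's scope. A secondary soft spot, even within the CP class: the claim that adjoint-closedness of $\{V_\alpha\}$ is ``precisely what encodes the symmetry'' needs justification. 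Symmetry in $L^2(\mathrm{tr})$ forces the Hamiltonian part to vanish and the CP part $\Phi(f)=\sum_\alpha V_\alpha^*fV_\alpha$ to equal its trace-adjoint $\sum_\alpha V_\alpha fV_\alpha^*$, but passing from that to a Kraus family with $\{V_\alpha\}=\{V_\alpha^*\}$ requires a unitary-mixing (Takagi-type) argument on the Choi matrix; it is true, but not automatic from an arbitrary GKLS representation.

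The paper's proof manufactures the same kind of weights without any CP assumption, and this is the idea your proposal is missing. For fixed $f=\sum_{\lambda\in\sigma(f)}\lambda Q_\lambda$ it sets $K_{nm}(t)=\tr{P_t(Q_{J(n)})Q_{J(m)}}/(\tr{Q_{J(n)}}\tr{Q_{J(m)}})$: mere positivity of $P_t$ applied to the single commutative family of spectral projections of $f$ (where positivity suffices, since only matrix elements of $P_t(Q_\lambda)$ against projections are needed) gives $K_{nm}(t)\ge0$, trace preservation and symmetry make $K(t)$ symmetric doubly stochastic, and writing $-\tr{f^{q/2}\LL(f^{q/2})}=\lim_{t\to0}\frac{1}{2t}\sum_{j,n}K_{jn}(t)(f_n^{q/2}-f_j^{q/2})^2$ reduces (RC) to the same scalar inequality, now for a classical reversible chain, with the limit $t\to0$ taken at the end. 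Your argument is salvageable by replacing the Kraus weights $b_{kl}$ with these time-dependent weights $K_{jn}(t)$; as written, however, it proves the theorem only for quantum Markov (completely positive) semigroups, not for the full class the statement covers.
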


\begin{remark} \label{rem-tr}
(1.) Obviously, when the algebra $\M$ acts on a finite dimensional Hilbert space $h$, the trace can be normalized and we shall obtain an invariant state for the semigroup. The same proof, however, can be easily adapted for the case when $\M$ coincides with the algebra of all bounded operators on a separable Hilbert space $h$.\\
(2.) For the regularity condition (RC), the result is already known (see Theorem 5.5 in \cite{OZ}), but with a different proof. So we could briefly demonstrate the previous statement in the following way: we use \cite{OZ} in order to have (RC) and then deduce (WRC) by Proposition {\ref{RC}}.
However, we think it could be useful to propose the alternative direct proof which follows, since it is not more 
complicated and we hope it can have some interest in that it suggests a way to reduce the problem to the analogous 
result for commutative spaces, while, in {\cite{OZ}}, 
the proof is completely different and uses some properties about reflection positive functions (Section $5$ in \cite{OZ}).  
\end{remark}

\begin{proof}
In the trace case, i.e. when $\rho$ is proportional to the identity, the regularity conditions (RC) and standard (WRC) are written respectively
\begin{eqnarray}\label{reg-trace}
-\tr{f^{q/2}\LL(f^{q/2})}&\le& -\frac{q^2}{4(q-1)}\tr{f^{q-1}\LL(f)},
\\
-4\tr{f^{1/2}\LL(f^{1/2})}&\le& -\tr{(\lg f)\LL(f)}.\nonumber
\end{eqnarray}
\underline{Step 1}. The spectral resolution of $f$. We call $N$ the dimension of the Hilbert space $h$. We consider a positive operator $f$ in $\M$, we call $\sigma(f)$ its spectrum and write its spectral representation 
$$
f=\sum_n f_n|e_n\rangle\langle e_n|=\sum_{\lambda\in\sigma(f)} \lambda Q_\lambda,
$$
where $(f_n)_n$ are the eigenvalues (repeated according to their multiplicity), $(e_n)_n$ is an orthonormal basis of $h$ made of eigenvectors of $f$, and $(Q_\lambda)_\lambda$ are the spectral projections on the eigenspaces of $f$, $Q_\lambda=\sum_{n:f_n=\lambda}|e_n\rangle\langle e_n|$, for $\lambda \in\sigma(f)$. We will have $h(f)=\sum_n h(f_n)|e_n\rangle\langle e_n|=\sum_{\lambda\in\sigma(f)}h( \lambda) Q_\lambda$, for any function $h$ defined on the spectrum of $f$. 
\\
Notice that each projection $Q_\lambda$
 really is in $\M$, since $f$ is, while the projections $|e_n\rangle\langle e_n|$ do not necessarily belong to $\M$, in general; notice also that the rank of a projection $Q_\lambda$ has dimension equal to the trace of $Q_\lambda$. We have the following obvious relations
$$
\sum_{\lambda\in\sigma(f)}Q_\lambda=\unit,\qquad
\sum_{\lambda\in\sigma(f)}\tr{Q_\lambda}=N.
$$
Finally, we introduce the map 
$$
J:\{1,... N\}\longrightarrow \sigma(f), \qquad J(k)=\lambda \Leftrightarrow Q_\lambda e_k=e_k.
$$
\underline{Step 2}. We fix $f$ and its spectral representation. We want to define a family of maps $(K(t))_{t\ge 0}$ on ${\mathbb R}^N$ describing the action of the quantum semigroup on the sub-algebra of $\M$ generated by  the projections $(Q_\lambda)_{\lambda\in\sigma(f)}$. 
For any $t\ge 0$, we define the $N\times N$ real matrix $K(t)$ with elements
$$
(K(t))_{nm}=K_{nm}(t)=\frac{\tr{P_t(Q_{J(n)})Q_{J(m)}}}{\tr{Q_{J(n)}}\tr{Q_{J(m)}}}
\qquad\mbox{ for } n,m=1,...N.
$$
We shall identify the matrix $K(t)$ and the corresponding linear operator on ${\mathbb R}^N$, as is usual.
We highlight some properties of this matrix.
\\
({\bf a}) - The elements $(K(t))_{nm}$ are non negative.\\
Indeed, since the semigroup $P$ is positive, $P_t(Q_\lambda)$ is a positive operator, and so also $Q_\mu P_t(Q_\lambda)Q_\mu$ and its trace $\tr{P_t(Q_\lambda) Q_\mu}\ge 0$, for all $\lambda$ and $\mu$ in $\sigma(f)$.
\\
({\bf b}) - The matrix $K(t)$ is doubly stochastic, i.e. it is stochastic and
the uniform measure is invariant for it.
Indeed, $K(t)$ is symmetric by definition, due to the fact that $P$ is trace-symmetric and
\begin{eqnarray*}
\sum_{m=1}^N (K(t))_{nm}
&=& \sum_{m=1}^N \frac{\tr{P_t(Q_{J(n)})Q_{J(m)}}}{\tr{Q_{J(n)}}\tr{Q_{J(m)}}}\\
&=& \sum_{\lambda\in\sigma(f)}\;\sum_{m:J(m)=\lambda} \frac{\tr{P_t(Q_{J(n)})Q_{\lambda}}}{\tr{Q_{J(n)}}\tr{Q_{\lambda}}}\\
&=& \sum_{\lambda\in\sigma(f)} \frac{\tr{P_t(Q_{J(n)})Q_{\lambda}}}{\tr{Q_{J(n)}}}
=\frac{\tr{P_t(Q_{J(n)})}}{\tr{Q_{J(n)}}}=1,
\end{eqnarray*}
where the last equality is due to the fact that $P$ is trace preserving.
\\
({\bf c}) - For any function $h:\sigma(f)\sra {\mathbb R}$, we can associate two different objects
$$
\tilde h=(h(f_n))_{n=1,...N}\in{\mathbb R}^N,\qquad
M_h=\sum_{\lambda\in\sigma(f)}h(\lambda)Q_\lambda\in\M.
$$
If $g$ is another such function, and $\tilde g$ and $M_g$ are defined consequently, it is easy to verify that 
(here $\scal{\,}{\,}$ denotes the usual scalar product in ${\mathbb R}^N$)
\begin{eqnarray}\label{pluto}
\scal{K(t)\tilde h}{\tilde g}=\scal{\tilde h}{K(t)\tilde g}
&=&\sum_{n,m} h(f_n)g(f_m)\frac{\tr{P_t(Q_{J(n)})Q_{J(m)}}}{\tr{Q_{J(n)}}\tr{Q_{J(m)}}}\\
&=&\sum_{\lambda,\mu\in\sigma(f)}\, \sum_{m:J(m)=\lambda} \,\sum_{n:J(n)=\mu} h(\mu)g(\lambda)\frac{\tr{P_t(Q_{\mu})Q_{\lambda}}}{\tr{Q_{\mu}}\tr{Q_{\lambda}}}\nonumber\\
&=&\sum_{\lambda,\mu\in\sigma(f)}h(\mu)g(\lambda) \tr{P_t(Q_{\mu})Q_{\lambda}}\nonumber\\
&=&\tr{P_t(M_h)M_g}=\tr{P_t(M_g)M_h}.\nonumber
\end{eqnarray}
\underline{Step 3}.
Going back to the regularity conditions \eqref{reg-trace}, we shall use the previous relation with $h$ and $g$ power functions.
Consider that, for $h$ of the form $h(\lambda)=\lambda^p$, $p>0$, we shall have $\tilde h=(f_n^p)_n$ and $M_h=f^p$.
 We can then write
\begin{eqnarray}\label{pippo}
-\tr{f^{q/2}\LL(f^{q/2})}
&=&-\lim_{t\sra 0} \frac{1}{t}\tr{f^{q/2}(P_t(f^{q/2})-f^{q/2})}
\nonumber\\\mbox{(by \eqref{pluto})}
&=&-\lim_{t\sra 0} \frac{1}{t}\sum_{j,n}f_j^{q/2}K_{jn}(t)(f_n^{q/2}-f_j^{q/2})
\nonumber\\
\mbox{(since $K$ is symmetric)}
&=&\lim_{t\sra 0} \frac{1}{2t}\sum_{j,n}K_{jn}(t)(f_n^{q/2}-f_j^{q/2})^2.
\end{eqnarray}
Now we recall the usual elementary inequality, for $a$ and $b$ positive real numbers,\\
$$(a^{q/2}-b^{q/2})^2\le\frac{q^2}{4(q-1)}(a^{q-1}-b^{q-1})(a-b),$$
and, using this in \eqref{pippo}, we obtain 
\begin{eqnarray*}
-\tr{f^{q/2}\LL(f^{q/2})}&\le&\frac{q^2}{4(q-1)}\lim_{t\sra 0} \frac{1}{2t}\sum_{j,n}K_{jn}(t)(f_n^{q-1}-f_j^{q-1})(f_n-f_j)
\\
\mbox{(since $K$ is symmetric)}
&=&-\lim_{t\sra 0} \frac{1}{t}\sum_{j,n}f_j^{q-1}K_{jn}(t)(f_n-f_j)
\\\mbox{(by \eqref{pluto})}
&=&-\lim_{t\sra 0} \frac{1}{t}\tr{f^{q-1} (P_t(f)-f))}
\\
&=&-\tr{f^{q-1} \LL(f)},
\end{eqnarray*}
This proves (RC) and, by Proposition \ref{RC}, we deduce standard (WRC).
\end{proof}

\begin{remark}
In the case when the semigroup is trace preserving, but not symmetric, we can prove similar regularity properties, but with different constants. We can proceed as in the previous proof and use the elementary inequalities usually exploited for Markov chains (see, for instance, \cite{DSC})
$$
(b^{q/2}-a^{q/2})b^{q/2}\le\frac{q}{2}b^{q-1}(b-a), \qquad
\lg b^2-\lg a^2\ge \frac{2}{b}(b-a).
$$
We shall obtain something similar to (RC) and a (WRC) with constant $\beta=2$ 
$$
\EE_2(I_{2,q}f,I_{2,q}f)
\le
\frac{q}{2}\EE_q(f,f), \qquad 
2 \EE(I_{2,1}f,I_{2,1}f)
\le
\EE(\lg(\rho^{1/2} f \rho^{1/2})-\lg\rho,f).$$
\end{remark}

\subsection{Relations between the different variational inequalities}

In this subsection, we shall prove that LSI implies SGI and, with the help of (WRC), also MLSI. 

\begin{theorem}\label{LS-SG}
LSI$(c)$ implies SGI$(c^{-1})$.
\end{theorem}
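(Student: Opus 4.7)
The plan is to linearize LSI$(c)$ around the element $f = \unit$, which is a minimizer of the entropy functional $H$. Take $f_\epsilon = \unit + \epsilon g$ with $g = g^* \in \M$, $\tr{\rho g} = 0$, and $\epsilon > 0$ small enough that $f_\epsilon > 0$. Since the semigroup is identity-preserving, $\LL \unit = 0$, so one immediately has $\EE(f_\epsilon, f_\epsilon) = \epsilon^2 \EE(g,g)$. The key analytic input is the second-order expansion $H(f_\epsilon) = \epsilon^2 \|g\|_2^2 + o(\epsilon^2)$. Once this is established, substituting into LSI$(c)$, dividing by $\epsilon^2$ and sending $\epsilon \to 0$ yields $\|g\|_2^2 \le c\, \EE(g,g)$; since $\tr{\rho g} = 0$, this reads ${\rm Var}(g) \le c\, \EE(g,g)$, which is SGI$(c^{-1})$ restricted to self-adjoint mean-zero $g$. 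The extension to arbitrary $f \in \M$ is routine: both ${\rm Var}$ and $\EE$ are invariant under subtracting the scalar $\tr{\rho f}\unit$, and split additively into contributions from the self-adjoint and skew-adjoint parts of $f$.

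\textbf{Quadratic expansion of $H$.} The normalization piece is simple: $\tr{\rho g} = 0$ gives $\|f_\epsilon\|_2^2 = 1 + \epsilon^2 \|g\|_2^2$, so $\|f_\epsilon\|_2^2 \lg \|f_\epsilon\|_2 = \tfrac{1}{2}\epsilon^2 \|g\|_2^2 + O(\epsilon^4)$. For $\langle f_\epsilon, T_2(f_\epsilon)\rangle$, set $\xi_\epsilon = \rho^{1/4} f_\epsilon \rho^{1/4} = \rho^{1/2} + \epsilon A$ with $A := \rho^{1/4} g \rho^{1/4}$, and expand the operator function $\xi \mapsto \xi \lg \xi$ to second order at $\xi_0 = \rho^{1/2}$. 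The first and second Fréchet derivatives of $\lg \xi$ come from the integral representation already used in Lemma \ref{logDerivative} (with $\epsilon$ in place of the time parameter $t$), differentiating once more to obtain the second derivative. After standard trace-cycling, together with the commutation of $\rho^{1/2}$ with $\lg\rho$, the zeroth- and first-order terms of $\langle f_\epsilon, T_2(f_\epsilon)\rangle$ cancel against the corresponding terms coming from $-\tfrac{1}{4}\langle f_\epsilon, \{f_\epsilon,\lg\rho\}\rangle$ — the first-order cancellation using $\tr{\rho g} = 0$ — while the quadratic contribution works out to $\tfrac{3}{2}\epsilon^2 \|g\|_2^2 + o(\epsilon^2)$. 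Subtracting the normalization yields $H(f_\epsilon) = \epsilon^2 \|g\|_2^2 + o(\epsilon^2)$, matching the familiar commutative calculation for $\int (1+\epsilon g)^2 \lg \big((1+\epsilon g)/\|1+\epsilon g\|_2\big)\,d\rho$.

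\textbf{Main obstacle.} The technical core is this second-order expansion of $\tr{\xi_\epsilon^2 \lg \xi_\epsilon}$. Because $A$ and $\rho^{1/2}$ do not commute, the Fréchet derivatives of $\lg \xi$ give rise to double and triple resolvent integrals of the form $\int_0^\infty (s+\rho^{1/2})^{-1}\cdots (s+\rho^{1/2})^{-1}\,ds$, and the coefficient of $\epsilon^2$ is not visibly $\|g\|_2^2$. Checking that all the non-commutative contributions combine into exactly $\tfrac{3}{2}\epsilon^2\|g\|_2^2$ is precisely what makes the relation between constants sharp, and is what allows one to avoid the factor $1/2$ loss appearing in \cite[Theorem 4.1]{OZ}.
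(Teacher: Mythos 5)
Your overall strategy is exactly the paper's: perturb the identity, $f_\epsilon=\unit+\epsilon g$ with $g=g^*$, $\tr{\rho g}=0$, use $\EE(f_\epsilon,f_\epsilon)=\epsilon^2\EE(g,g)$, extract the quadratic term of $H(f_\epsilon)$, divide by $\epsilon^2$ and let $\epsilon\to 0$; the reduction of general $f$ to self-adjoint mean-zero $g$ is indeed routine. But there is a genuine gap, and in fact an error, at the one step you flag as the ``main obstacle'' and then assert rather than prove: the claim that the quadratic contribution of $\langle f_\epsilon,T_2(f_\epsilon)\rangle$ ``works out to exactly $\tfrac32\epsilon^2\|g\|_2^2$'', hence $H(f_\epsilon)=\epsilon^2\|g\|_2^2+o(\epsilon^2)$, is \emph{false} in general. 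Writing $\bar g=\rho^{1/4}g\rho^{1/4}$ and $\rho=\sum_k\rho_kQ_k$, the coefficient of $\epsilon^2$ in $H(\unit+\epsilon g)$ is $B(g)-\tfrac12\tr{\bar g^2}$, where the double-resolvent integrals evaluate blockwise to
\begin{equation*}
B(g)=\frac32\sum_{k,j:\,\rho_k=\rho_j}\rho_k\,\tr{Q_kgQ_jg}
+\sum_{k<j:\,\rho_k\neq\rho_j}\tr{Q_kgQ_jg}\,(\rho_k\rho_j)^{1/2}
\left(1+\frac{\rho_k^{1/2}+\rho_j^{1/2}}{2(\rho_k^{1/2}-\rho_j^{1/2})}\,\lg\frac{\rho_k}{\rho_j}\right),
\end{equation*}
and since $\frac{(t+1)\lg t}{t-1}>2$ strictly for $t\neq1$ (take $t=(\rho_k/\rho_j)^{1/2}$), the off-diagonal blocks of $g$ with respect to distinct eigenvalues of $\rho$ contribute \emph{strictly more} than $\tfrac32(\rho_k\rho_j)^{1/2}\tr{Q_kgQ_jg}$. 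So the correct statement is only the one-sided bound $B(g)\ge\tfrac32\tr{\bar g^2}$, with equality exactly when $g$ commutes with $\rho$; the ``familiar commutative calculation'' you invoke is the equality case and does not transfer.

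Fortunately the inequality points the right way: $\liminf_{\epsilon\to0}\epsilon^{-2}H(\unit+\epsilon g)\ge\tr{\bar g^2}={\rm Var}(g)$ suffices to conclude ${\rm Var}(g)\le c\,\EE(g,g)$, so your strategy is repairable. But the repair \emph{is} the proof: everything in the theorem rests on establishing the lower bound on the quadratic coefficient, which requires expanding $\lg(\rho^{1/2}+\epsilon\bar g)$ via the resolvent identity (with an explicit $\epsilon^3$ remainder, controlled in norm — note your appeal to a ``second Fréchet derivative'' of Lemma \ref{logDerivative} needs this quantitative control), evaluating the resulting scalar integrals $\int_0^\infty\frac{2s+a}{(s+b)(s+a)^2}\,ds$ on each spectral block, and invoking the elementary inequality $\frac{\lg t}{t-1}\ge\frac{2}{t+1}$. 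This last estimate is precisely where the optimal constant comes from — the earlier bound of Olkiewicz and Zegarlinski, with the extra factor $1/2$, corresponds to discarding part of the same integrand — and it is the part your proposal leaves entirely unexecuted, while mis-stating its conclusion as an identity.
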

This is exactly the generalization of what happens for commutative algebras. The weaker inequality gap $\ge c^{-1}/2$ was already proven in \cite{OZ} (Theorem 4.1). The result is now surely optimal since examples are known with spectral gap equal to the inverse of the log-Sobolev constant. In this note, we have concentrated on the finite dimensional case, but this proof works for any von Neumann algebra acting on a separable Hilbert space.
\\

\begin{proof}
We consider a real number $\varepsilon$ and $f\in\M$ such that $1+\e f> 0$ (so $f$ is selfadjoint)
and $\langle 1, f\rangle=\tr{\rho f}=0$. We shall denote $\bar f =\rho^{1/4}f\rho^{1/4}$. The idea is to proceed as for the analogous proof for commutative functions: consider the log-Sobolev inequality for $1+\e f$, divide by $\e^2$ and take the limit for $\e\sra 0$.
\\
By definition, a logarithmic Sobolev inequality with constant $c$ is written
$$
H(f)\le c\EE(f,f), \qquad
\mbox{ with }
H(f):=\langle f,T_2(f)\rangle-\|f\|_2^2\lg\|f\|_2.
$$
Writing computations in detail, when the argument is $1+\varepsilon f$, we have
$$
\|1+\e f\|_2^2=1+2\e \langle 1, f\rangle +\e^2 \tr{\bar f^2}=1+\e^2 \tr{\bar f^2}
$$
and
\begin{eqnarray}\label{A}
H(1+\e f)
&=& \tr{(\rho^{1/2}+\e {\bar f})^2\lg(\rho^{1/2}+\e {\bar f})-\frac{1}{2}(\rho^{1/2}+\e {\bar f})^2\lg\rho}
\nonumber\\
&&-\|1+\e f\|_2^2\lg\|1+\e f\|_2\nonumber\\
&=&\tr{(\rho^{1/2}+\e {\bar f})^2(\lg(\rho^{1/2}+\e {\bar f})-\lg\rho^{1/2})}
\\
&&-\frac{1}{2}(1+\e^2\tr{{\bar f}^2})\lg(1+\e^2\tr{{\bar f}^2}).\nonumber
\end{eqnarray}
The usual integral representation of the logarithm gives us 
\begin{eqnarray*}
\lg(\rho^{1/2}+\e {\bar f})&-&\lg\rho^{1/2}
= \int_0^{+\infty} \frac{1}{s+\rho^{1/2}}-\frac{1}{s+\rho^{1/2}+\e \bar f}\, ds\\
&=&\e  \int_0^{+\infty} \frac{1}{s+\rho^{1/2}}\bar f \frac{1}{s+\rho^{1/2}+\e \bar f}\, ds\\
&=&\e  \int_0^{+\infty} \frac{1}{s+\rho^{1/2}}\bar f \frac{1}{s+\rho^{1/2}}\, ds\\
&&-\e^2 \int_0^{+\infty} \frac{1}{s+\rho^{1/2}}\bar f \frac{1}{s+\rho^{1/2}}\bar f \frac{1}{s+\rho^{1/2}}\, ds\\
&&+\e^3 \int_0^{+\infty} \frac{1}{s+\rho^{1/2}}\bar f \frac{1}{s+\rho^{1/2}}\bar f \frac{1}{s+\rho^{1/2}}\bar f \frac{1}{s+\rho^{1/2}+\e \bar f}\, ds.
\end{eqnarray*}
Now we use the latter relation in \eqref{A}. Considering that we can also write 
$(\rho^{1/2}+\e {\bar f})^2=\rho+\e(\rho^{1/2}{\bar f}+\bar f \rho^{1/2})+\e^2\bar f^2$, we deduce
\begin{eqnarray}\label{C}
H(1+\e f)
&=& \e A(f)+\e^2 (B(f)-\frac{1}{2}\tr{{\bar f}^2})+\e^3\tr{...},
\end{eqnarray}
where
\begin{eqnarray*}
A(f)&=& \tr{\rho  \int_0^{+\infty} \frac{1}{s+\rho^{1/2}}\bar f \frac{1}{s+\rho^{1/2}}\, ds},\\
B(f)&=&\tr{(\rho^{1/2}{\bar f}+\bar f \rho^{1/2})\int_0^{+\infty} \frac{1}{s+\rho^{1/2}}\bar f \frac{1}{s+\rho^{1/2}}\, ds}\\
&&-\tr{\rho\int_0^{+\infty} \frac{1}{s+\rho^{1/2}}\bar f \frac{1}{s+\rho^{1/2}}\bar f \frac{1}{s+\rho^{1/2}}\, ds}.
\end{eqnarray*}
One can easily verify that the term $\e^3$ is multiplied by a bounded factor, so we can ignore it.
The first order term is equal to
\begin{eqnarray*}
A(f)=\int_0^{+\infty}\tr{\frac{\rho}{(s+\rho^{1/2})^2}\bar f}\, ds
&=& \tr{\int_0^{+\infty}\frac{\rho}{(s+\rho^{1/2})^2}\, ds\bar f}=\tr{\rho f}=0.
\end{eqnarray*}
So the central point is the estimate of the second order coefficient, which can be rewritten as
\begin{eqnarray}\label{B(f)}
B(f)&=&2\int_0^\infty \tr{\bar f \frac{1}{s+\rho^{1/2}}\bar f \frac{\rho^{1/2}}{s+\rho^{1/2}}}\, ds
-\int_0^\infty \tr{\bar f \frac{1}{s+\rho^{1/2}}\bar f \frac{\rho}{(s+\rho^{1/2})^2}}\, ds\nonumber\\
&=& \int_0^\infty \tr{\bar f \frac{1}{s+\rho^{1/2}}\bar f \frac{\rho^{1/2}}{s+\rho^{1/2}}\left(2-\frac{\rho^{1/2}}{s+\rho^{1/2}}\right)}\, ds\\
&=& \int_0^\infty \tr{ f \frac{\rho^{1/2}}{s+\rho^{1/2}} f \frac{2s+\rho^{1/2}}{(s+\rho^{1/2})^2}\rho}\, ds.\nonumber
\end{eqnarray}
We can use the spectral decomposition of the state $\rho$, $\rho=\sum_k\rho_kQ_k$, for some $(\rho_k)_k$ collection of strictly positive numbers and $(Q_n)_n$ orthogonal projections such that $\sum_nQ_n=\unit$. Writing the operator $f$ in block matrix form in this basis, $f=\sum_{k,j} Q_k fQ_j $, we obtain
\begin{equation}\label{B}
B(f) = \sum_{k,j} \rho_j^{1/2}\rho_k \tr{Q_k fQ_jf} \int_0^\infty \frac{2s+\rho_k^{1/2}}{(s+\rho_j^{1/2})(s+\rho_k^{1/2})^2}\, ds.
\end{equation}
So now we have to compute the integrals of real functions. For $a,b$ real positive numbers, $a\neq b$, we have
\begin{eqnarray*}
\int_0^\infty \frac{2s+a}{(s+a)^3}\, ds
&=&\left(\frac{-2}{s+a}+\frac{a}{2(s+a)^2}\right|_0^\infty=\frac{3}{2a}\,,
\\
\int_0^\infty \frac{2s+a}{(s+b)(s+a)^2}\, ds
&=&\int_0^\infty \frac{a-2b}{(b-a)^2}\left(\frac{1}{s+b}-\frac{1}{s+a}\right)
+\frac{a}{a-b}\frac{1}{(s+a)^2}\, ds\\
&=&\left(\frac{a-2b}{(b-a)^2}\lg\frac{s+b}{s+a}-\frac{a}{a-b}\frac{1}{s+a}\right|_0^\infty\\
&=&\frac{a-2b}{(b-a)^2}\lg\frac{a}{b}+\frac{1}{a-b}\,.
\end{eqnarray*}
Going back to $B(f)$, expression \eqref{B}, we shall use the first of the previous integral when $\rho_j=\rho_k$, with $a=\rho_k^{1/2}$, and the second otherwise, with $a=\rho_k^{1/2}$ and $b=\rho_j^{1/2}$. We obtain
\begin{eqnarray*}
B(f)&=&\frac{3}{2}\sum_{k,j:\,\rho_k=\rho_j} \rho_k \tr{Q_k fQ_jf}
\\&&+\sum_{k,j:\,\rho_k\neq \rho_j} \tr{Q_k fQ_jf}
\frac{\rho_k \rho_j^{1/2}}{\rho_k^{1/2}- \rho_j^{1/2}}
\left(1+\frac{\rho_k^{1/2} - 2\rho_j^{1/2}}{2(\rho_k^{1/2}- \rho_j^{1/2})}\,\lg\frac{\rho_k}{\rho_j}\right)
\end{eqnarray*}
where we can rewrite the second term summing on $k<j$, since $\tr{Q_k fQ_jf}=\tr{Q_j fQ_kf}$, and so 
\begin{eqnarray*}
B(f)&=&\frac{3}{2}\sum_{k,j:\:\,\rho_k=\rho_j} \rho_k \tr{Q_k fQ_jf}
\\&&+\sum_{k<j:\,\rho_k\neq \rho_j} \tr{Q_k fQ_jf}
(\rho_k \rho_j)^{1/2}
\left(1+\frac{\rho_k^{1/2}+\rho_j^{1/2}}{2(\rho_k^{1/2}- \rho_j^{1/2})}\,\lg\frac{\rho_k}{\rho_j}\right).
\end{eqnarray*}
Now notice that, for $t>0$, $\frac{\lg t}{t-1}\ge \frac{2}{t+1}$ (this inequality can be easily verified computing the derivatives of the two functions); then take $t=(\rho_k/\rho_j)^{1/2}$ and deduce
$$
\frac{\rho_k^{1/2}+\rho_j^{1/2}}{2(\rho_k^{1/2}- \rho_j^{1/2})}\,\lg\frac{\rho_k}{\rho_j}\ge 2,
$$
so that
$$
B(f)\ge \frac{3}{2}\sum_{k,j} \rho_k^{1/2} \rho_j^{1/2} \tr{Q_k fQ_jf}=\frac{3}{2}\tr{\bar f^2}.
$$
Summing up, using the lower bound for $B(f)$ in relation \eqref{C}, we have that there exists the limit
$$
\lim_{\e\sra 0}\frac{1}{\e^2}H(1+\e f)\ge \tr{\bar f^2}={\rm Var}(f).
$$
Moreover the LSI$(c)$ will give
$$
 {\rm Var}(f)\le \lim_{\e\sra 0}\frac{1}{\e^2}H(1+\e f)
 \le\lim_{\e\sra 0}\frac{c}{\e^2}\EE(1+\e f,1+\e f)=c \EE(f,f),
$$
which is a spectral gap inequality of constant $c^{-1}$. 
\end{proof}

\begin{remark}
For the reader interested in technical details, we should underline that the general idea of this proof is the same as the one followed in classical contexts and really also in \cite{OZ}, but we can improve the latter essentially since, in \cite{OZ}, the term $\left(2-\frac{\rho^{1/2}}{s+\rho^{1/2}}\right)$ in expression \eqref{B(f)} was dropped simply considering that it is not less than $1$, while here we can get a more precise lower bound.
\end{remark}

For the MLSI, we prove, in the following proposition, that it is implied by LSI when some (WRC) holds. Remember that, for classical Markov processes, (WRC) is always verified, in the standard form when the invariant law is also reversible.

\begin{proposition}\label{LS-MLS}
LSI$(c)$ and (WRC-$\beta$) imply MLSI$(\beta/(2c))$.
\end{proposition}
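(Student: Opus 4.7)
The plan is to chain three ingredients already available in the paper: (i) the identity $E(f)=2H(I_{2,1}(f))$ noted immediately after Definition~\ref{LSI}, (ii) the hypothesis LSI$(c)$ applied to the positive element $I_{2,1}(f)$, and (iii) the hypothesis (WRC-$\beta$) to translate $\EE(I_{2,1}f,I_{2,1}f)$ into the Dirichlet-form expression on the right-hand side of MLSI.

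Concretely, I would fix $f>0$ in $\M$ and first observe that $I_{2,1}(f)=\rho^{-1/4}(\rho^{1/2}f\rho^{1/2})^{1/2}\rho^{-1/4}$ is strictly positive (since $\rho$ and $f$ are), so it is a legitimate argument for LSI. Applying LSI$(c)$ to $g=I_{2,1}(f)$ gives
$$
H(I_{2,1}f)\le c\,\EE(I_{2,1}f,I_{2,1}f).
$$
Using the identity $E(f)=2H(I_{2,1}f)$ this rewrites as
$$
\tfrac{1}{2}E(f)\le c\,\EE(I_{2,1}f,I_{2,1}f).
$$
Next, (WRC-$\beta$) says $\beta\,\EE(I_{2,1}f,I_{2,1}f)\le \EE(\lg(\rho^{1/2}f\rho^{1/2})-\lg\rho,f)$, so multiplying the previous display by $2/\beta$ and substituting yields
$$
\frac{\beta}{2c}\,E(f)\le \EE\bigl(\lg(\rho^{1/2}f\rho^{1/2})-\lg\rho,\,f\bigr),
$$
which is exactly MLSI$(\beta/(2c))$.

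There is essentially no hard step here: the whole point of introducing (WRC) is precisely to bridge the quadratic form on $I_{2,1}f$ (which appears naturally on the right-hand side of LSI after the substitution $g=I_{2,1}f$) and the quadratic form $\EE(\lg(\rho^{1/2}f\rho^{1/2})-\lg\rho,f)$ (which appears on the right-hand side of MLSI). The only mildly delicate point is to be sure that LSI$(c)$, as stated for strictly positive $f$, can be applied to $I_{2,1}(f)$; this is immediate from positivity of $\rho$ and $f$, since $\rho^{1/2}f\rho^{1/2}>0$ and taking its square root and conjugating by $\rho^{-1/4}$ preserves strict positivity in $\M$. Note that the constant obtained, $\beta/(2c)$, is optimal within this argument in the sense that when $\beta=4$ (the standard case, verified for instance for symmetric trace-preserving semigroups by Theorem~\ref{trace}) we recover MLSI$(2/c)$, matching the commutative hierarchy recalled in Figure~\ref{fig:commutative_case}.
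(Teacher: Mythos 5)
Your proof is correct and follows essentially the same route as the paper's own: chaining the identity $E(f)=2H(I_{2,1}(f))$, the hypothesis LSI$(c)$ applied to $I_{2,1}(f)$, and (WRC-$\beta$) to pass from $\EE(I_{2,1}f,I_{2,1}f)$ to $\EE(\lg(\rho^{1/2}f\rho^{1/2})-\lg\rho,f)$. Your additional check that $I_{2,1}(f)$ is strictly positive (so LSI applies) is a detail the paper leaves implicit, but it does not change the argument.
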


\begin{proof}
The basic ideas of this proof are the same as in classical known results. The mathematical objects have a slightly more complicated form here, but the technical difficulties disappear thanks to the fact that we have introduced the entropy and the log-Sobolev inequalities in a proper way, so that some mutual relations are maintained. 
\\
LSI can be written 
$$
H(f)=\langle f,T_2(f)\rangle-\|f\|_2^2\lg\|f\|_2\le c\EE(f,f)
$$
and we remember that
$$
E(f)= 2\langle I_{2,1}(f), T_2(I_{2,1}(f)) \rangle
-2 \| I_{2,1}(f) \|_2^2 \lg\| I_{2,1}(f) \|_2
=2H(I_{2,1}(f)).
$$
So we obtain, using first LSI and second the (WRC) with constant $\beta$, 
$$
E(f)=2H(I_{2,1}(f))\le 2c\EE(I_{2,1}(f),I_{2,1}(f))
\le2c \beta^{-1} \EE(\lg(\rho^{1/2} f \rho^{1/2})-\lg\rho,f),
$$
which is a MLSI with constant $\beta/(2c)$.
\end{proof}

{\bf About remaining open questions}.
\\
Really there can be a large amount of problems starting from these topics. However, concentrating only on the non-commutative version of the scheme written in Figure \ref{fig:commutative_case},
we can highlight two main missing points.
\\
1. Some of the implications (and relations with hypercontractivity) depend on regularity conditions, different but very similar. We think that the main problem about this aspect remains open: are the regularity conditions verified for all symmetric positive quantum semigroups? (as happens for the classical case) or can we at least find some general conditions under which they hold? We have some examples when they are verified (the trace case illustrated in Theorem \ref{trace}, and also a class of positive semigroups on $2\time 2$ matrices, see \cite{C}), but we know no counterexample.
\\
2. We have not been able by now to prove, whether MLSI implies SGI in general. We have some computations (see \cite{Preprint}) clarifying what happens for diagonal preserving semigroups on $M_2({\mathbb C})$: the implication holds, but what is maybe more interesting is that one can observe that the ideas used to prove the same result for the commutative case cannot work.
\medskip

{\bf Acknowledgements.} The financial support of MIUR FIRB 2010 project RBFR10COAQ
``Quantum Markov Semigroups and their Empirical Estimation'' and of PRIN project ``Problemi differenziali di evoluzione: approcci deterministici e stocastici e loro interazioni''
are gratefully acknowledged.

\end{document}